\newtheorem{Thm}{Theorem}[section]
\newtheorem{theorem}[Thm]{Theorem}
\newtheorem*{theorem*}{Theorem}
\newtheorem*{remark*}{Remark}
\newtheorem*{obs*}{Observation}
\newtheorem*{prob*}{Problem}
\newtheorem{corollary}[Thm]{Corollary}
\newcommand{\co}{\mathbb{C}}
\newcommand{\N}{\mathbb{N}}
\newcommand{\D}{\mathbb{D}}
\newcommand{\Dh}{\widehat{\mathbb{D}}}
\newcommand{\T}{\mathbb{T}}
\title[Point spectrum for a class of truncated Toeplitz operators]{Point spectrum and hypercyclicity problem \\
for a class of truncated Toeplitz operators}
\author[A. Baranov, A. Lishanskii]{Anton Baranov, Andrei Lishanskii}
\address{Anton Baranov\\
	Department of Mathematics and Mechanics, St. Petersburg State University, 28, Universitetskii prosp., St. Petersburg, 198504, Russia}
\email{anton.d.baranov@gmail.com}
\address{Andrei Lishanskii\\
St. Petersburg University, 7/9, Universitetskaya nab., St. Petersburg, 199034, Russia }
	\email{lishanskiyaa@gmail.com}
\thanks{The work of A. Lishanskii in Section 2 was performed at the Saint Petersburg Leonhard Euler International Mathematical
Institute and supported by the Ministry of Science and Higher Education of the Russian Federation (agreement no. 075-15-2022-287).
The results of Section 3 were obtained with the support of the Russian Science Foundation project 19-11-00058P}
\keywords{hypercyclic operator, Toeplitz operator, model space, truncated Toeplitz operator}
\subjclass{47A16, 47B35, 30H10}
\begin{document}

\begin{abstract}
In this note we discuss an open problem whether a truncated Toeplitz operator
on a model space can be hypercyclic.
We compute point spectrum and eigenfunctions for a class of 
truncated Toeplitz operators with polynomial analytic and antianalytic parts. 
We show that, for a class of model spaces, truncated Toeplitz operators with symbols of the form
$\Phi(z) =a \bar{z} +b + cz$, $|a| \ne |c|$, have complete sets of eigenvectors, and,
in particular, are not hypercyclic.  
\end{abstract}

\maketitle
\sloppy

\section{Introduction}

A continuous linear operator $T$ on a separable Banach (or Frech\'et) space 
$X$  is said to be  \textit{hypercyclic} if 
there exists $x \in X$ such that the set 
$\{T^n x:\, n\in\mathbb{N}_0\}$ is dense in $X$
(here $\mathbb{N}_0 = \{0,1,2, \dots\}$). 

The first example
of a hypercyclic operator in a Banach space setting was given by 
S. Rolewicz \cite{rol} who showed that the operator $\alpha B$, where $B$
is the backward shift and $\alpha$ is an arbitrary complex number with $|\alpha|>1$,
is hypercyclic on $\ell^p$, $1\le p<\infty$. It was shown that many important classes of operators 
have this property. Among basic examples of hypercyclic operators were Toeplitz operators with antianalytic symbols.

\subsection{Hypercyclic Toeplitz operators}
As usual, $\mathbb{D}$ and $\mathbb{T}$ will denote the unit disk and 
the unit circle, respectively, and $H^2$ will stand for the Hardy space in $\D$.
Recall that for a function $\psi \in L^\infty(\mathbb{T})$ the {\it Toeplitz operator}
$T_\psi: H^2 \to H^2$ with the symbol $\psi$  is defined as 
$T_\psi f = P_+(\psi f)$, where $P_+$ is the orthogonal projection from
$L^2(\mathbb{T})$ onto  $H^2$. The following theorem, due to 
G. Godefroy and J. Shapiro \cite{gosh}, describes all hypercyclic Toeplitz operators 
with antianalytic symbols (i.e., $T_{\overline{\varphi}}$ where $\varphi \in H^\infty$).

\begin{theorem*} {\rm (G. Godefroy, J. Shapiro, 1991)}
The Toeplitz operator $T_{\overline{\varphi}}$, where $\varphi \in H^\infty$, 
is hypercyclic if and only if $\varphi(\mathbb{D}) \cap \mathbb{T} \ne \emptyset$.
\end{theorem*}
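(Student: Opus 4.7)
The plan is to apply the Godefroy--Shapiro hypercyclicity criterion for sufficiency and a direct norm (or inverse-norm) estimate for necessity, both driven by the fact that $T_{\bar\phi}^* = T_\phi$ acts on $H^2$ simply as multiplication by $\phi$.

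For sufficiency I will first identify the reproducing kernels $k_w(z)=(1-\bar w z)^{-1}$, $w\in\D$, as eigenvectors of $T_{\bar\phi}$. Since $T_\phi f=\phi f$ and $\langle f,k_w\rangle=f(w)$,
$$
\langle f,\,T_{\bar\phi}k_w\rangle=\langle T_\phi f,\,k_w\rangle=\phi(w)f(w)=\phi(w)\langle f,\,k_w\rangle,
$$
so $T_{\bar\phi}k_w=\overline{\phi(w)}\,k_w$. Assuming $\phi$ is non-constant, the open mapping theorem makes $\phi(\D)$ open, and the hypothesis $\phi(\D)\cap\T\ne\emptyset$ then forces $\phi(\D)$ to contain points both inside and outside $\T$. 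Consequently $U_0=\{w\in\D:|\phi(w)|<1\}$ and $U_\infty=\{w\in\D:|\phi(w)|>1\}$ are nonempty open subsets of $\D$. The next step is to verify that for any nonempty open $U\subset\D$ the linear span of $\{k_w:w\in U\}$ is dense in $H^2$: if $f\in H^2$ is orthogonal to $k_w$ for every $w\in U$, then $f\equiv 0$ on $U$, and $f=0$ by the identity principle. Applying the Godefroy--Shapiro criterion to the eigenvector families attached to $U_0$ (eigenvalues in $\D$) and $U_\infty$ (eigenvalues outside $\overline{\D}$) then delivers a dense $G_\delta$ set of hypercyclic vectors.

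For necessity I would use the dichotomy coming from connectedness of $\phi(\D)$: if $\phi(\D)\cap\T=\emptyset$, then either $\phi(\D)\subset\D$ or $\phi(\D)\subset\{|z|>1\}$. In the first subcase $\|\phi\|_\infty\le 1$ and hence $\|T_{\bar\phi}\|\le 1$, so every orbit is norm-bounded and therefore cannot be dense in the infinite-dimensional Hilbert space $H^2$. In the second subcase $1/\phi\in H^\infty$ with $\|1/\phi\|_\infty\le 1$; the analytic Toeplitz calculus gives $T_\phi T_{1/\phi}=T_{1/\phi}T_\phi=I$, so $T_{\bar\phi}=(T_\phi)^*$ is invertible with $\|T_{\bar\phi}^{-1}\|\le 1$. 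Iterating, $\|T_{\bar\phi}^n f\|\ge\|f\|$ for every $n\ge 0$, so the orbit of any nonzero $f$ stays bounded away from $0$ and cannot be dense. A separate check disposes of the trivial case $\phi\equiv c$: $T_{\bar\phi}=\bar c\,I$ is a scalar operator and never hypercyclic.

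The one step that needs genuine content is the density of $\mathrm{span}\{k_w:w\in U\}$ in $H^2$; this is the hook by which the identity principle enters the argument. A minor subtlety worth flagging is that the equivalence as stated is really about non-constant $\phi$, since a constant unimodular symbol satisfies $\phi(\D)\cap\T\ne\emptyset$ but produces a scalar operator.
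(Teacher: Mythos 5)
Your proposal is correct, and for the sufficiency direction it is exactly the route the paper takes: the paper only remarks that the Cauchy kernels $k_\lambda$ are eigenvectors of $T_{\overline\phi}$ with eigenvalues $\overline{\phi(\lambda)}$ and that sufficiency then follows from the Godefroy--Shapiro criterion; you fill in the two details the paper leaves implicit (the open mapping theorem forcing $\phi(\D)$ to meet both components of $\C\setminus\T$, and the density of $\mathrm{span}\{k_w : w\in U\}$ via the identity principle). For necessity the paper offers no argument at all --- it simply cites Godefroy--Shapiro --- so your contraction/expansion dichotomy is a genuine addition: if $\phi(\D)\subset\D$ then $T_{\overline\phi}$ is a contraction and orbits are bounded, while if $\phi(\D)\subset\{|z|>1\}$ then $T_{1/\phi}$ inverts $T_\phi$, so $\|T_{\overline\phi}^n f\|\ge\|f\|$ and orbits stay away from $0$; both cases correctly preclude hypercyclicity, and the connectedness of $\phi(\D)$ justifies the dichotomy. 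Your flag about constant unimodular symbols is also well taken: as literally stated the theorem fails for $\phi\equiv c$ with $|c|=1$, and the standard formulation (e.g.\ in Bayart--Matheron) includes the hypothesis that $\phi$ be nonconstant; this is a small imprecision in the paper's quotation rather than in your argument.
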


It is well known that every Cauchy kernel $k_\lambda(z) = (1- \bar \lambda z)^{-1}$, $|\lambda|<1$,  
is an eigenvector for an  antianalytic Toeplitz operator $T_{\overline{\varphi}}$ corresponding to the eigenvalue
$\overline{\varphi(\lambda)}$. Then sufficiency of the condition 
$\varphi(\mathbb{D}) \cap \mathbb{T} \ne \emptyset$ for hypercyclicity follows immediately from the Godefroy--Shapiro
Criterion (see \cite{gosh}, \cite[Corollary 1.10]{bm} or \cite[Theorem 3.1]{gp}).

Analytic Toeplitz operators $T_\varphi$, $\varphi \in H^\infty$, are simply multiplication operators. It is clear that they 
cannot be hypercyclic.

In  \cite{sh} S. Shkarin posed the problem to describe hypercyclic Toeplitz operators 
in terms of their symbols. This seems to be a difficult and, at the moment, widely open problem. 
Shkarin gave a necessary and sufficient condition for hypercyclicity in the case when the symbol is of the form
$\Phi(z) =a\overline{z} +b +cz$
(i.e., with tridiagonal matrix): $T_\Phi$ is hypercyclic if and only if $|a|> |c|$ 
and
\begin{equation}
\label{spe}
(\co \setminus \Phi(\D))  \cap \D \ne \emptyset, \qquad (\co \setminus \Phi(\D)) \cap \Dh \ne \emptyset.
\end{equation}
Here $\Dh = \co\setminus \overline{\D}$ and $\Phi$ is extended to $\co$ as 
$\Phi(z) =\frac{a}{z} +b +cz$.

Hypercyclicity of Toeplitz operators with the symbols of the form
$$
\Phi(z) = R(\overline{z}) +\varphi(z),
$$
where $R$ is a polynomial or a rational function with poles outside $\overline{\D}$ and $\varphi\in H^\infty$,
was studied in \cite{abcl, barl}. It turned out that the valence of the meromorphic continuation of the symbol $\Phi$, 
that is, $\Phi(z)= R\big(\frac{1}{z}\big)+\varphi(z)$, is crucial for hypercyclicity. Also it was shown that
the hypercyclicity problem is closely related to cyclicity of certain families for some associated analytic Toeplitz operator. 
A complete description of hypercyclic Toeplitz operators in this class is not known, however, there is a number of conditions sufficient
for hypercyclicity. For the operators with symbols of the form $\Phi(z) = a\overline{z}  +\varphi(z)$ known necessary
and sufficient conditions almost meet: 

\begin{theorem*} {\rm (\cite{barl})}
Let $\Phi(z) = \frac{a}{z} +\varphi(z)$, where $\varphi \in H^\infty$, satisfy \eqref{spe}.

1. If $T_\Phi$ is hypercyclic, then $\Phi$ is univalent in $\D$\textup;

2. If $\varphi\in A(\D)$ \textup(disk-algebra\textup) and $\Phi$ is injective in $\overline{\D}$, then 
$T_\Phi$ is hypercyclic.
\end{theorem*}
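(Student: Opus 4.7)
The two parts pull in opposite directions: part 1 is an obstruction derived from eigenvectors of $T_\Phi^*$, while part 2 constructs hypercyclicity via the Godefroy--Shapiro criterion, so I would treat them separately. For part 1 the key fact is that a hypercyclic operator on a Banach space admits no eigenvalue for its adjoint: if $T^*v=\mu v$ with $v\ne 0$, then for every $x$ the scalar orbit $\langle T^n x,v\rangle=\mu^n\langle x,v\rangle$ lies on a ray, circle, or point in $\mathbb{C}$, so $x$ cannot be hypercyclic. It therefore suffices to produce an eigenvector of $T_\Phi^*$ assuming $\Phi$ is not univalent in $\D$. Writing $T_\Phi=aB+M_\phi$ with $B$ the backward shift, so that $T_\Phi^*=\bar a M_z+T_{\bar\phi}$, and combining the identity $z k_w(z)=\bar w^{-1}(k_w(z)-1)$ for the Cauchy kernel $k_w(z)=(1-\bar w z)^{-1}$ with the classical relation $T_{\bar\phi}k_w=\overline{\phi(w)}k_w$, one obtains
$$T_\Phi^* k_w=\overline{\Phi(w)}\,k_w-\frac{\bar a}{\bar w},\qquad w\in\D\setminus\{0\}.$$
If $\Phi(w_1)=\Phi(w_2)=\mu$ with $w_1\ne w_2$ in $\D$ (necessarily both nonzero, since $\Phi$ has a pole at $0$), then $h=\bar w_1 k_{w_1}-\bar w_2 k_{w_2}$ is a nonzero vector on which the $\bar a$ terms cancel, so $T_\Phi^* h=\bar\mu h$ and $T_\Phi$ cannot be hypercyclic.

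For part 2 I would apply the Godefroy--Shapiro criterion. Solving $(T_\Phi-\lambda)f=0$ directly yields, up to a multiplicative constant, the eigenfunction
$$f_\lambda(z)=\frac{1}{a+z(\phi(z)-\lambda)}=\frac{1}{z(\Phi(z)-\lambda)},$$
which lies in $H^2$ exactly when its denominator is nonvanishing on $\D$, i.e.\ when $\lambda\notin\Phi(\D)$. Because $\Phi$ is injective on $\overline{\D}$ and $\phi\in A(\D)$, the image $\gamma=\Phi(\T)$ is a Jordan curve, $\Phi(\D)$ is the unbounded component of $\C\setminus\gamma$ (since $\Phi(0)=\infty$), and the bounded component $\Omega^-$ is a Jordan domain. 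For $\lambda\in\Omega^-$ the denominator of $f_\lambda$ is bounded away from $0$ on $\overline{\D}$, so $f_\lambda\in A(\D)\subset H^2$. Condition \eqref{spe} guarantees $\Omega^-\cap\D\ne\emptyset$ and $\Omega^-\cap\Dh\ne\emptyset$, producing the open clusters of eigenvalues required by the criterion.

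The main obstacle is density: one must show that $\operatorname{span}\{f_\lambda:\lambda\in V\}$ is dense in $H^2$ for any open $V\subset\Omega^-$, which by analyticity of $\lambda\mapsto f_\lambda$ reduces to density over all of $\Omega^-$. Take $g\in H^2$ with $\langle g,f_\lambda\rangle=0$ for every $\lambda\in\Omega^-$. The identity $\bar z=z^{-1}$ on $\T$ rewrites
$$\overline{\langle g,f_\lambda\rangle}=\int_{\T}\frac{\overline{g(z)}/z}{\Phi(z)-\lambda}\,dm(z),$$
and the substitution $w=\Phi(z)$, legitimate since $\Phi$ is injective on $\T$, converts this into a Cauchy integral $(2\pi i)^{-1}\int_\gamma H(w)(w-\lambda)^{-1}dw$ of a specific $L^2(\gamma)$-function $H$ depending linearly on $\bar g$. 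Vanishing throughout $\Omega^-$ forces, via the Plemelj--Sokhotski decomposition on $\gamma$, that $H$ is the boundary trace of a Hardy-class function $F$ on the outer domain $\Phi(\D)$ with $F(\infty)=0$. Pulling back by the conformal bijection $\Phi:\D\to\Phi(\D)$ gives $\overline{g(z)}=z^2\Phi'(z)\,F(\Phi(z))$ on $\T$; the right-hand side extends analytically to $\D$ and vanishes at $z=0$, so its Fourier expansion on $\T$ is supported in strictly positive powers of $z$, while the left-hand side is supported in non-positive powers. The supports being disjoint forces both sides to vanish, hence $g\equiv 0$. The technical heart is making the change of variables and the Plemelj step rigorous with only the regularity afforded by $\phi\in A(\D)$; this is precisely where the disk-algebra hypothesis is used essentially, and explains the gap between the assumptions of part 2 (injectivity on $\overline{\D}$, $\phi\in A(\D)$) and the weaker necessary condition of part 1.
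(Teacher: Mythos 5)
First, a point of reference: the paper gives no proof of this statement --- it is quoted verbatim from \cite{barl} --- so I am judging your argument against the known proof strategy rather than against a proof printed here. Your Part 1 is correct and complete, and it is exactly the standard argument: the identity $T_\Phi^* k_w=\overline{\Phi(w)}\,k_w-\bar a/\bar w$ is right, the combination $\bar w_1 k_{w_1}-\bar w_2 k_{w_2}$ does kill the constant terms, and a nonzero eigenvector of the adjoint rules out hypercyclicity. Your Part 2 also starts on the right road: the eigenfunctions $f_\lambda=\bigl(z(\Phi-\lambda)\bigr)^{-1}$, their parametrization by the bounded complementary component $\Omega^-$ of the Jordan curve $\Phi(\T)$, the role of \eqref{spe} in supplying eigenvalues on both sides of $\T$, and the reduction (via analyticity of $\lambda\mapsto f_\lambda$ on the connected set $\Omega^-$) to a single completeness statement are all correct and are the standard route through the Godefroy--Shapiro criterion.

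The genuine gap is in the completeness step, and you have located it yourself without closing it. With $\phi$ only in $A(\D)$, the function $\Phi'$ need not have boundary values on $\T$, and $\gamma=\Phi(\T)$ need not be rectifiable; consequently the substitution $w=\Phi(z)$, the assertion that $H\in L^2(\gamma)$, the Cauchy integral over $\gamma$, and the Plemelj--Sokhotski decomposition are all unavailable as stated. Even granting a rectifiable $\gamma$, the final step requires knowing that $z^2\Phi'(z)\,F(\Phi(z))$ lies in a Smirnov/Hardy class of $\D$ before you may read off its Fourier spectrum on $\T$; this means tracking the $E^p$-classes of $\Omega^-$ and of the unbounded domain $\Phi(\D)$ through the conformal map $\Phi$ (whose derivative has a double pole at $0$), which is not automatic and is precisely the content that makes Part 2 a theorem rather than a computation. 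So what you have is a correct proof of Part 1 together with a correct \emph{outline} of Part 2 whose technical heart --- the density of $\operatorname{span}\{f_\lambda:\lambda\in V\}$ under the stated regularity --- is asserted rather than proved; to complete it you would need either to carry out the Smirnov-class bookkeeping on the non-smooth Jordan domains, or to replace the Plemelj argument by the more operator-theoretic completeness argument used in \cite{barl}.
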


Thus, the difference is only in the injectivity on the boundary. Also it should be noted that the second part of \eqref{spe}
(i.e. $(\co \setminus \Phi(\D)) \cap \Dh \ne \emptyset$)
is necessary for hypercyclicity of $T_\Phi$, while 
for the first part only a weaker variant $(\co \setminus \Phi(\D))  \cap \T \ne \emptyset$ is known to be necessary.
\medskip

\subsection{Truncated Toeplitz operators}
Let $S^*f =T_{\bar z} f = \frac{f(z) - f(0)}{z}$ be the backward shift in $H^2$.
By the classical theorem of Arne Beurling, any $S^*$-invariant subspace is of the form $K_\theta = H^2\ominus\theta H^2$
for some inner function $\theta$ in $\D$. Subspaces $K_\theta$ are often referred to as {\it model spaces} due to 
their role in Nagy--Foia\c{s} model for contractions.  

{\it Truncated Toeplitz operators} (TTO) are restrictions of usual Toeplitz operators onto $K_\theta$. 
Namely, given a space $K_\theta$ and $\psi \in L^\infty(\mathbb{T})$, define the operator
$A_\psi: K_\theta \to K_\theta$ by the formula 
$$
A_\psi f = P_\theta (\psi f), \qquad f\in K_\theta, 
$$
where $P_\theta$ stands for the orthogonal projection from $L^2(\T)$ onto $K_\theta$. 
Formally, $A_\psi$ depends also on $\theta$, but we always assume $\theta$ to be fixed and do not include it into notations.

Even if some special cases of TTO-s were well studied for a long time (e.g., $A_z$ -- the model operator 
of Nagy--Foia\c{s} theory and its functions $\varphi(A_z)  = A_\varphi$, $\varphi\in H^\infty$), 
a systematic study of TTO-s was initiated 
by a seminal paper by D. Sarason \cite{sar}. Note that one can consider operators with unbounded symbols (assuming there exists 
a bounded extension of an operator defined on the dense subset of bounded functions in $K_\theta$) 
and it may happen that a bounded TTO has no bounded symbol at all (see, e.g., \cite{barf}. In what 
follows we consider the case of bounded symbols only.

During the last decade the theory of truncated Toeplitz operators became an active field of research (see the surveys  
\cite{ob1, ob2} and the references therein). However, it seems that the spectral properties of TTO-s are far from being well understood.
E.g., to the best of our knowledge, it is not known whether the point spectrum of a TTO can have nonempty interior. 
For some classes of  TTO-s Fredholmness and invertibility criteria were found by 
M.\,C.~C\^{a}mara and J.\,R.~Partington \cite{cam}. In particular, they gave a description 
of the point spectrum of a TTO with a rational symbol in case of the Hardy space in the half-plane.

A problem which apparently did not attract much attention yet is to understand the dynamics of TTO-s and in particular 
their hypercyclicity. In fact, it is not known whether hypercyclic TTO-s do exist.

\begin{prob*} 
Do there exist hypercyclic truncated Toeplitz operators?
\end{prob*}

Note that, since $K_\theta$ is invariant with respect to antianalytic Toeplitz operators, 
one has $A_{\bar\varphi} = T_{\bar\varphi}$ for $\varphi\in H^\infty$. Also, recall that existence of 
an eigenvector for the adjoint operator is a trivial obstacle for hypercyclicity. Combining this, one comes to the following 
simple observation.

\begin{obs*}
If an inner function $\theta$ has a zero in $\D$, then for any $\varphi\in H^\infty$ 
the truncated Toeplitz operators $A_\varphi$ and $A_{\bar\varphi}$ are not hypercyclic.
\end{obs*}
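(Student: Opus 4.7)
The plan is to exhibit eigenvectors for both $A_\phi$ and $A_{\bar\phi}$ under the assumption that $\theta(\lambda)=0$ for some $\lambda\in\D$. This is enough because $A_\phi^{*}=A_{\bar\phi}$ (a direct calculation from the definitions using self-adjointness of $P_\theta$), and the existence of an eigenvector for the adjoint of an operator is a classical obstacle to hypercyclicity: if $T^{*}y=\mu y$ with $y\neq 0$, then $\langle T^n x,y\rangle=\mu^n\langle x,y\rangle$ for every $x$, so no orbit $\{T^n x\}$ can be dense.

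The eigenvector for $A_{\bar\phi}$ is essentially free. The Szeg\H{o} kernel $k_\lambda(z)=(1-\bar\lambda z)^{-1}$ lies in $K_\theta$ whenever $\theta(\lambda)=0$, because in that case it coincides with the reproducing kernel of $K_\theta$ at $\lambda$. Since $K_\theta$ is $T_{\bar\phi}$-invariant for $\phi\in H^\infty$, the identity $T_{\bar\phi}k_\lambda=\overline{\phi(\lambda)}\,k_\lambda$ recalled after the Godefroy--Shapiro theorem yields $A_{\bar\phi}k_\lambda=\overline{\phi(\lambda)}\,k_\lambda$. This already rules out hypercyclicity of $A_\phi$.

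To show that $A_{\bar\phi}$ is not hypercyclic I need an eigenvector of $A_\phi$ itself. I would factor $\theta=b_\lambda\tilde\theta$ with $b_\lambda(z)=(z-\lambda)/(1-\bar\lambda z)$ and $\tilde\theta$ inner, and set $f=\tilde\theta\, k_\lambda$. Using $|\tilde\theta|=1$ on $\T$, for any $h\in H^2$,
\[
\langle f,\theta h\rangle=\langle\tilde\theta k_\lambda,b_\lambda\tilde\theta h\rangle=\langle k_\lambda,b_\lambda h\rangle=(b_\lambda h)(\lambda)=0,
\]
so $f\in K_\theta$. Next, $(z-\lambda)f=b_\lambda\tilde\theta=\theta$, and writing $\phi(z)=\phi(\lambda)+(z-\lambda)\psi(z)$ with $\psi\in H^2$ gives $\phi f=\phi(\lambda)f+\psi\theta$, whose projection onto $K_\theta$ equals $\phi(\lambda)f$. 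Hence $A_\phi f=\phi(\lambda)f$, and therefore $A_{\bar\phi}=A_\phi^{*}$ has a nonzero eigenvector and is not hypercyclic.

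No step is really hard; the only point that requires some care is the verification that the ``twisted'' vector $\tilde\theta k_\lambda$ actually belongs to $K_\theta$ and not merely to $H^2$, and this rests on the unimodularity of the inner factor $\tilde\theta$ on $\T$ together with the reproducing property at $\lambda$.
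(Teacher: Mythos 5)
Your proof is correct and follows essentially the same route as the paper: the kernel $k_\lambda$ gives an eigenvector of $A_{\bar\phi}$, and your vector $f=\tilde\theta k_\lambda$ is literally the same function as the paper's eigenvector $\theta/(z-\lambda)$ for $A_\phi$, with the same decomposition $\phi f=\phi(\lambda)f+\theta\,\frac{\phi-\phi(\lambda)}{z-\lambda}$ doing the work. The only difference is cosmetic: you verify explicitly that $f\in K_\theta$, which the paper takes for granted.
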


\begin{proof}
If $\theta(\lambda) = 0$, then $k_\lambda(z) = (1-\bar \lambda z)^{-1}$ belongs to $K_\theta$, 
and $A_{\bar\varphi} k_\lambda = \overline{\varphi(\lambda)} k_\lambda$ whence $A_\varphi = A_{\bar\varphi}^*$ is not hypercyclic. 
Also, 
$$
A_\varphi \Big( \frac{\theta}{z-\lambda} \Big) = P_\theta \Big(   \frac{\varphi \theta}{z-\lambda}\Big) = 
 P_\theta \Big(   \theta \frac{\varphi -\varphi(\lambda)}{z-\lambda} +  \varphi(\lambda) \frac{\theta}{z-\lambda}\Big)  =
\varphi(\lambda) \frac{\theta}{z-\lambda}.
$$
Hence, $A_\varphi$ also has an eigenvector and so $A_{\bar\varphi}$ is not hypercyclic.
\end{proof}

Thus, if we are interested in hypercyclicity of TTO-s with analytic or antianalytic symbols, the only
interesting case is the case of singular inner function $\theta$. In this case the point spectrum is empty. 

\begin{prob*} 
Let $\theta$ be a singular inner function. Do there exist hypercyclic truncated Toeplitz operators 
with analytic or antianalytic symbols?
\end{prob*}

One can start with the case of a sufficiently regular symbol $\varphi$, say in the disk-algebra $A(\D)$ or even analytic in a neighborhood of 
$\overline{\D}$. In this case the spectrum of the operator $A_\varphi$ is known to be $\varphi(\sigma(\theta))$ \cite[Lecture III]{nik},
where $\sigma(\theta)$ stands for the boundary spectrum of $\theta$ (the set of those $\zeta\in \T$ for which 
$\liminf_{z\to\zeta, z\in \D} |\theta(z)|<1$). Thus, a necessary condition for hypercyclicity is that $\varphi(\sigma(\theta)) \cap\T \ne
\emptyset$. 

\begin{prob*} 
Let $\theta(z) = \exp\big(\frac{z+1}{z-1}\big)$  be the simplest atomic singular inner function. 
Do there exist hypercyclic truncated Toeplitz operators with a symbol $\varphi$ in $H^\infty$ or in $A(\D)$? This is equivalent to the following approximation problem: does there exist a function $f\in K_\theta$ such that 
$$
{\rm Clos}\,\{ \varphi^n f + \theta g:\ g\in H^2, n\in \mathbb{N}_0\} = H^2\,?
$$
\end{prob*}
\medskip

D. Sarason \cite{sar1} showed that the operator $(I+V)^{-1}$, where $V$
is the Volterra integration operator in $L^2(0,1)$, is unitarily equivalent to the operator 
$\frac{1}{2} (I+A_z)$ in $K_\theta$ with  $\theta(z) = \exp\big(\frac{z+1}{z-1}\big)$. 
In \cite{leon} it was shown that  $I+V$ (and, thus, $(I+V)^{-1}$) is not supercyclic. We conclude that
the truncated Toeplitz operator $\frac{1}{2} (I+A_z)$ is not supercyclic in $K_\theta$.
\bigskip


\section{Point spectrum of TTO-s with polynomial symbols}

In this section we consider truncated Toeplitz operators with symbols of the form
\begin{equation}
\label{poll}
\Phi(z) = \sum_{k=1}^N a_k z^{-k} + \sum_{l=0}^M c_l z^l, \qquad M,N\in \mathbb{N},  \ a_N, c_M \ne 0,
\end{equation}
and describe their point spectrum under some mild restrictions. It turns out that the set of eigenvectors $\lambda$ has a rather curious
structure involving some polynomial dependence between the values of $\theta$ at the roots of $\Phi - \lambda$. 
The next  theorem is very close to \cite[Theorem 5.4]{cam} where the invertibility criterion for
a TTO with a rational symbol is given in the case of the Hardy space in the half-plane.
The proof in \cite{cam} is based on the Riemann--Hilbert problem methods, while our approach is completely elementary.

Recall that the mapping $f \to \tilde f = \bar z\bar f \theta$ in $L^2(\T)$ is an involution on the space $K_\theta$ 
(considered as a subspace of $L^2(\T)$). Reproducing kernel of $K_\theta$ at the point $\lambda\in\D$ is 
given by
$$
k_\lambda^\theta (z) = P_\theta\Big(\frac{1}{1- \bar \lambda z}\Big) = 
\frac{1-\overline{\theta(\lambda)}\theta(z)}{1-\bar \lambda z}.
$$
Note that the conjugate kernel $\tilde k_\lambda^\theta = \widetilde{k_\lambda^\theta}$ is of the form
$$
\tilde k_\lambda^\theta (z)  = \frac{\theta(z) - \theta(\lambda)}{z -\lambda}.
$$

\begin{theorem}
\label{main1}
Let $\Phi$ be given by \eqref{poll} and let $z_j$, $j=1, \dots M+N$, be the zeros of $\Phi-\lambda$.
Assume that all $z_j$ are distinct and $|z_j| \ne 1$ 
for any $j$. Then $\lambda$ is an eigenvalue of $A_\Phi$ if and only if  there exist nonzero polynomials $P_1, P_2$
of degrees at most $M-1$ and $N-1$ respectively such that the following conditions hold:
\begin{equation}
\label{in}
z_j^N P_1(z_j) \theta(z_j) + P_2(z_j) = 0, \qquad |z_j| <1, 
\end{equation}
and
\begin{equation}
\label{out}
z_j^N P_1(z_j) +P_2(z_j) \overline{\theta(1/\overline z_j)} = 0, \qquad |z_j| >1. 
\end{equation}
In this case the corresponding eigenfunction is given by 
$$
f   = \sum_{|z_j| < 1} \beta_j    z_j^N P_1(z_j)  \tilde k^\theta_{z_j}
+ \sum_{|z_j| > 1} \beta_j P_2(z_j) k^\theta_{1/\bar z_j}
$$
for some complex coefficients $\beta_j$.
\end{theorem}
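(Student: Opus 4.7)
The plan is to translate the eigenvalue equation $A_\Phi f=\lambda f$ into a polynomial identity for $f$ involving the polynomial $Q(z)=z^N(\Phi(z)-\lambda)$, and then to extract the two interpolation conditions from the two defining properties of $K_\theta$: $f\in H^2$ (which will produce \eqref{in}) and $f\perp\theta H^2$ (which will produce \eqref{out}). First I would observe that $A_\Phi f=\lambda f$ is equivalent to $(\Phi-\lambda)f\in(K_\theta)^\perp=\theta H^2\oplus\overline{zH^2}$, so $(\Phi-\lambda)f=\theta g+u$ with $g\in H^2$ and $u\in\overline{zH^2}$. Multiplying by $z^N$ and splitting $z^Nu$ into a polynomial $p$ of degree at most $N-1$ plus its tail in $\overline{zH^2}$, the tail must lie in $H^2\cap\overline{zH^2}=\{0\}$, which leaves
\[Q(z)f(z)=z^N\theta(z)g(z)+p(z),\qquad g\in H^2,\ \deg p\le N-1.\]
Analyticity of $f$ at each $z_j\in\D$ then forces $p(z_j)+z_j^N\theta(z_j)g(z_j)=0$, which is \eqref{in} with $P_1=g$ and $P_2=p$.

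To extract \eqref{out} I would use the boundary characterization of $K_\theta$ in the form $f=\theta\bar\psi$ on $\T$ with $\psi\in zH^2$. Solving for $\psi$ gives $\psi=(\theta\bar p+z^{-N}\bar g)/\bar Q$ on $\T$; multiplying numerator and denominator by $z^{M+N}$ to replace $\bar Q$ by the reciprocal polynomial $Q^*(z)=z^{M+N}\overline{Q(1/\bar z)}$ converts this to
\[Q^*(z)\psi(z)=z^{M+1}\theta(z)p^*(z)+z^M\overline{g(z)}\quad\text{on }\T,\]
where $p^*$ is the reciprocal polynomial of $p$. The anti-analytic tail of $z^M\overline{g(z)}$ must vanish, since every other term on the right is in $H^2$; this simultaneously forces $g$ to be a polynomial of degree at most $M$ and replaces $z^M\bar g$ by its reciprocal polynomial $g^*$. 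The resulting identity then extends analytically to $\D$, and analyticity of $\psi$ at each zero $1/\bar z_j$ of $Q^*$ -- which lies in $\D$ precisely when $|z_j|>1$ -- yields, after a short reciprocal-polynomial computation, exactly \eqref{out}.

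With \eqref{in} and \eqref{out} in hand I recover the explicit form of $f$ by expanding $f=(p+z^N\theta g)/Q$ in partial fractions; since both $p$ and $z^Ng$ have degree at most $M+N-1$, the $j$-th residue equals $\bigl(p(z_j)+z_j^N\theta(z)g(z_j)\bigr)/\bigl(Q'(z_j)(z-z_j)\bigr)$. For $|z_j|<1$, \eqref{in} collapses this to $\frac{z_j^N g(z_j)}{Q'(z_j)}\cdot\frac{\theta(z)-\theta(z_j)}{z-z_j}$; for $|z_j|>1$, \eqref{out} collapses it to $\frac{p(z_j)}{Q'(z_j)}\cdot\frac{1-\theta(z)\overline{\theta(1/\bar z_j)}}{z-z_j}$. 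This yields the advertised formula with $\beta_j=1/Q'(z_j)$. The converse reverses the construction: each summand already lies in $K_\theta$ (the first is a difference quotient of $\theta$; the second is a scalar multiple of the reproducing kernel $k_{1/\bar z_j}$), so $f\in K_\theta$ automatically, and the same identities show $(\Phi-\lambda)f\in\theta H^2\oplus\overline{zH^2}$, so $A_\Phi f=\lambda f$. The hardest step is the second paragraph: converting the abstract orthogonality $f\perp\theta H^2$ into a concrete interpolation condition at the symmetric points $1/\bar z_j$ requires the reciprocal-polynomial manipulation, which also forces the a priori $H^2$-function $g$ down to a polynomial.
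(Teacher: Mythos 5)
Your proof is correct and arrives at the same pivotal identity $Qf=z^N\theta g+p$ as the paper, but by a partly different route, and the part that differs is the harder half. For the setup, the paper computes the anti\-analytic part $pz^{-N}$ explicitly from the Taylor coefficients of $f$ (via $T_{z^{-k}}f$), whereas you get it from $(\Phi-\lambda)f\perp K_\theta$ together with $H^2\cap\overline{zH^2}=\{0\}$; this is a little cleaner and, in the converse direction, spares you the paper's final verification that $R$ coincides with the prescribed combination of Taylor coefficients. Condition \eqref{in} is then obtained identically (vanishing of the right-hand side at the $z_j\in\D$). The genuine divergence is in deriving \eqref{out} and the polynomiality of $g$: the paper expands $f=(z^N\theta h+R)/Q$ using $1/Q=\sum_j\beta_j/(z-z_j)$, splits each term into its $K_\theta$ and $\theta H^2$ components, sets the $\theta H^2$ component $f_2$ equal to zero, and compares residues at the $z_j$ with $|z_j|>1$; you instead use the boundary characterization $f=\theta\bar\psi$ with $\psi\in zH^2$ and read \eqref{out} off as analyticity of $\psi$ at the reflected zeros $1/\bar z_j\in\D$ of $Q^*$. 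I checked your reciprocal-polynomial computation and it does produce exactly \eqref{out}; your route has the advantage of exhibiting \eqref{in} and \eqref{out} symmetrically as two ``no pole in $\D$'' conditions (for $f$ and for $\psi$ respectively), at the cost of the $Q^*,p^*,g^*$ bookkeeping. One small point you should make explicit: in the final paragraph you use that $z^Ng$ has degree at most $M+N-1$ (so that Lagrange interpolation applies and $z^Ng/Q$ contributes no constant term); this requires $\deg g\le M-1$, which your own argument does yield provided you exploit $\psi\in zH^2$ rather than merely $\psi\in H^2$ (the constant Fourier coefficient of $Q^*\psi$ vanishes, which kills the coefficient $\hat g(M)$). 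With that sentence added, the argument is complete.
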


\begin{proof}
We are looking for solutions of the equation $A_\Phi f = \lambda f$, $f\in K_\theta$.
Denote by $\varphi$ the analytic part of $\Phi$, $\varphi(z) = \sum_{l=0}^M c_l z^l$. 
Note that $A_\Phi f = P_\theta (T_\Phi f)$, and 
$$
T_\Phi f = \varphi f + \sum_{k=1}^N a_k \frac{f - \sum_{j=0}^{k-1} \frac{f^{(j)} (0)}{j!} z^j}{z^k}.  
$$
Thus, $A_\Phi f = \lambda f$ is equivalent to the equation 
$$
\varphi f + \sum_{k=1}^N a_k \frac{f - \sum_{j=0}^{k-1} \frac{f^{(j)} (0)}{j!} z^j}{z^k} = \lambda f + \theta h,
$$
where $h\in H^2$ or, equivalently, 
$$
(\Phi-\lambda) f  = \theta h +  \sum_{k=1}^N a_k\sum_{j=0}^{k-1} \frac{f^{(j)} (0)}{j!} z^{j-k}.
$$
Multiplying by $z^N$, we get
\begin{equation}
\label{qq}
Qf = z^N \theta h + R,
\end{equation}
where $Q=  z^N (\Phi-\lambda)$ is a polynomial of degree $M+N$ with the zeros $z_j$ 
and $R(z)  = \sum_{k=1}^N a_k\sum_{j=0}^{k-1} \frac{f^{(j)} (0)}{j!} z^{N-k+j}$ 
is a polynomial of degree at most $N-1$. Since $z_j$ are distinct we can write
\begin{equation}
\label{go}
\frac{1}{Q(z)} = \sum_{j=1}^{M+N} \frac{\beta_j}{z-z_j}
\end{equation}
for some coefficients $\beta_j$. 

Assume that $\lambda$ is an eigenvector and so there exists a nontrivial $f \in K_\theta$ satisfying \eqref{qq}. 
If $|z_j| <1$, then the right-hand side of \eqref{qq} must vanish at $z_j$ and we get 
$z_j^N \theta(z_j) h(z_j) + R(z_j) = 0$. Hence, we have
\begin{equation}
\label{ff}
\begin{aligned}
f(z) & = \sum_{|z_j| < 1} \beta_j   \frac{z^N \theta(z) h(z) - z_j^N \theta(z_j) h(z_j)}{z - z_j} + 
\sum_{|z_j| < 1} \beta_j \frac{R(z) - R(z_j)}{z - z_j} \\
& + \sum_{|z_j|>1} \beta_j \frac{z^N \theta(z) h(z)}{z-z_j} + 
\sum_{|z_j|>1} \beta_j \frac{R(z)}{z-z_j}.
\end{aligned}
\end{equation}
For $|z_j|<1$ we further have
$$
\frac{z^N \theta(z) h(z) - z_j^N \theta(z_j) h(z_j)}{z - z_j} = \theta(z) \frac{z^N h(z) - z_j^N h(z_j)}{z - z_j} 
+ z_j^N h(z_j) \frac{\theta(z) - \theta(z_j)}{z - z_j}.
$$
Note that the first term is in $\theta H^2$, while the second belongs to $K_\theta$. 

Since the degree of $R$ is at most $N-1$, we have the Lagrange interpolation formula 
\begin{equation}
\label{lag}
\frac{R(z)}{Q(z)} =  \sum_{j=1}^{M+N} \beta_j \frac{R(z)}{z-z_j} = \sum_{j=1}^{M+N} \beta_j \frac{R(z_j)}{z-z_j}.
\end{equation}
Finally, note that for $|z_j| >1$ we have
$$
\frac{1}{z-z_j} =  \frac{\theta(z) \overline{\theta (1/\bar z_j)}}{z - z_j} + 
 \frac{1 - \theta(z)\overline{\theta(1/\bar z_j)}}{z- z_j},
$$
where, again, the first term is in $\theta H^2$, while the second is in $K_\theta$. Combining all these observations we 
conclude that $f=f_1+\theta f_2$ where $f_1\in K_\theta$ and $f_2 \in H^2$ are given by
\begin{equation}
\label{parts}
\begin{aligned}
f_1 (z) & = \sum_{|z_j| < 1} \beta_j    z_j^N h(z_j) \frac{\theta(z) - \theta(z_j)}{z - z_j} +
\sum_{|z_j| > 1} \beta_j R(z_j) \frac{1 - \theta(z)\overline{\theta(1/\bar z_j)}}{z- z_j}, \\ 
f_2 (z) & = \sum_{|z_j| < 1} \beta_j  \frac{z^N h(z) - z_j^N h(z_j)}{z - z_j}  
+   \sum_{|z_j| > 1} \beta_j \frac{R(z_j)\overline{\theta (1/\bar z_j)}}{z - z_j}.
\end{aligned}
\end{equation}
However, by our assumption, $f\in K_\theta$, whence $f_2 = 0$. In view of \eqref{go} this is equivalent 
to
\begin{equation}
\label{res}
\frac{z^N h(z)}{Q(z)} = \sum_{|z_j| < 1} \beta_j  \frac{z_j^N h(z_j)}{z - z_j}  
-   \sum_{|z_j| > 1} \beta_j \frac{R(z_j)\overline{\theta (1/\bar z_j)}}{z - z_j}.
\end{equation}
Hence $z^N h$ is a polynomial of degree at most $M+N-1$ and since $h$ is analytic at $0$ we conclude that
$h$ is a polynomial of degree at most $M-1$. If we put $P_1= h$ and $P_2 = R$, then
the condition $z_j^N \theta(z_j) h(z_j) + R(z_j) = 0$ for $|z_j| <1$ is equivalent to \eqref{in}.  
Comparing the residues at $z_j$ with $|z_j|>1$ in \eqref{res} we conclude that 
$z_j^N h(z_j) = - R(z_j)\overline{\theta (1/\bar z_j)}$ which is equivalent to \eqref{out}.
\medskip

To prove that conditions \eqref{in} and \eqref{out} are sufficient for $\lambda$ 
to be an eigenvalue for $A_\Phi$, we reverse the arguments. Assume that there exists $P_1$ and $P_2$ 
as in \eqref{in} and \eqref{out} and let $h= P_1$,  $R=P_2$. Then the interpolation formula \eqref{res} holds 
and so the function $f_2$ in \eqref{parts} is zero. Define $f_1$ by \eqref{parts}. 
Combining the formula for $f_1$ with \eqref{lag} we see that $f= f_1 = f_1+\theta f_2$ satisfies the equality \eqref{ff}. 
Using the fact that, $z_j^N \theta(z_j) h(z_j) + R(z_j) = 0$ for $|z_j| <1$ we, finally conclude that
$Qf = z^N h\theta + R$. Dividing by $z^N$ and comparing the coefficients at negative powers 
it is easy to see that $R$ must coincide with
$\sum_{k=1}^N a_k\sum_{j=0}^{k-1} \frac{f^{(j)} (0)}{j!} z^{N-k+j}$ where $a_k$ 
are the coefficients at $z^{-1}$ in \eqref{poll}.
\end{proof}

\begin{remark*}
{\rm Note that for some configurations of zeros with respect to the unit circle, 
the conditions \eqref{in} and \eqref{out} may be never satisfied. In particular, as is shown below, 
for a three-term TTO $\lambda$ is never  an eigenvalue if $z_1$ and $z_2$ lie in different components
of $\co\setminus\T$.}
\end{remark*}

\begin{remark*}
{\rm The above results remain true if there exist zeros of $\Phi-\lambda$ with $|z_j| =1$, but $z_j \notin \sigma(\theta)$ and so $\theta$
is analytic in a neighborhood of $z_j$. Such $z_j$ can be included in any of the conditions \eqref{in}
or \eqref{out} since in this case $\theta(z_j) = 1/\overline{\theta(1/\overline z_j)}$. 
The same is true if $z_j \in \sigma(\theta)$ but $|\theta(z_j)|=1$ and $\theta$ has a finite angular derivative at $z_j$ 
($z_j$ is a Julia--Carath\'eodory point for $\theta$)
and so the space $K_\theta$ contains the reproducing kernel at $z_j$. It is not, however, clear to us whether
$\lambda$ can be an eigenvalue if one of $z_j$ lies on $\T$ and 
is not a Julia--Carath\'eodory point for $\theta$. }
\end{remark*}
\bigskip


\section{Three-term truncated Toeplitz operators}

In this section we consider the case when $\Phi(z) =a \bar z +b + c z$, $z\in\T$. In this case 
the formulation of Theorem \ref{main1} will be substantially simplified. 

\begin{corollary}
\label{main2}
Let $\Phi(z) =\frac{a}{z} +b + c z $, $a,c\ne 0$, and assume that the zeros $z_1, z_2$ 
of the function $\Phi-\lambda$ are distinct and satisfy $|z_j| \ne 1$, $j=1,2$. 
Then $\lambda$ is an eigenvalue for $A_\Phi$ if and only if 
either 

1. $|z_1|, |z_2| <1$ and $z_1\theta(z_1) = z_2\theta (z_2)$, \\
or 

2. $|z_1|, |z_2| >1$ and $\frac{1}{z_1} \theta\Big(\frac{1}{\bar z_1}\Big) = \frac{1}{z_2} \theta\Big(\frac{1}{\bar z_2}\Big)$.

In case 1 the eigenfunction of $A_\Phi$ corresponding to $\lambda$ is given by 
\begin{equation}
\label{glj}
f_\lambda = z_1 \tilde k^\theta_{z_1} - z_2 \tilde k^\theta_{z_2},  
\end{equation}
while in case 2
$$
f_\lambda = \frac{1}{\bar z_1} k^\theta_{1/\bar z_1} - \frac{1}{\bar z_2}  k^\theta_{1/\bar z_2}.
$$
\end{corollary}

\begin{proof}
Since in this case $M=N=1$, the polynomials $P_1$ and $P_2$ in \eqref{in} and \eqref{out} are just constants. 
If both $z_1, z_2\in \D$, we have $P_1 z_1 \theta(z_1) + P_2 = P_1 z_2 \theta(z_2) + P_2 = 0$ which 
implies $z_1\theta(z_1) = z_2\theta (z_2)$. Analogously, 
if $z_1, z_2\in \Dh$, then 
$P_1 z_1 +P_2 \overline{\theta(1/\overline z_1)} = P_1 z_2 +P_2 \overline{\theta(1/\overline z_2)} = 0$,
whence $ z_1^{-1} \overline{\theta(1/\overline z_1)} = z_2^{-1} \overline{\theta(1/\overline z_2)}$.

Finally, consider the case $|z_1| <1$, $|z_2|>1$. Then $P_1 z_1 \theta(z_1) + P_2  
= 0 = P_1 z_2 +P_2 \overline{\theta(1/\overline z_2)}$, whence
$$
z_1\theta(z_1) \overline{\theta(1/\overline z_2)} = z_2,
$$
an impossible equality since its left-hand side in $\D$ and $|z_2|> 1$.

Conversely, equalities  $z_1\theta(z_1) = z_2\theta (z_2)$ and, respectively, 
$\frac{1}{\bar z_1} \theta\Big(\frac{1}{\bar z_1}\Big) = \frac{1}{\bar z_2} \theta\Big(\frac{1}{\bar z_2}\Big)$
imply the existence of the polynomials $P_1$ and $P_2$ of degree zero (nonzero constants)
such that \eqref{in} and\eqref{out} are satisfied and so $\lambda$ is an eigenvector by Theorem \ref{main1}.

The form of the eigenfunctions follows easily from the calculations of Theorem \ref{main1}.
\end{proof}

We excluded the case of multiple zeros of $\Phi - \lambda$ to avoid uninteresting technicalities. In this case our 
polynomial dependencies \eqref{in} and \eqref{out} will involve also the values of the derivatives of $\theta$
at the multiple zeros. However, in the case of symbols of the form $\Phi(z) = \frac{a}{z} +b + c z$ the answer is easy.

\begin{corollary}
\label{main2}
Let $\Phi(z) =\frac{a}{z} +b + c z $, $a,c\ne 0$, and assume that $\Phi-\lambda$ has a zero $z_0$ in $\D$ of multiplicity 2. 
Then $\lambda$ is an eigenvalue for $A_\Phi$ if and only if $z_0 \theta'(z_0) + \theta(z_0) =0$.
\end{corollary}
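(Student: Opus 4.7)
The plan is to specialize the derivation from the proof of Theorem \ref{main1} to the double-zero case, where the partial-fraction expansion of $1/Q$ has a single double pole at $z_0$ in place of two simple poles. The setup is unchanged: with $M = N = 1$, the equation $A_\Phi f = \lambda f$ for $f \in K_\theta$ is equivalent to
$$c(z-z_0)^2 f(z) = z\theta(z) h(z) + R, \qquad h \in H^2,$$
where $Q(z) = z(\Phi(z)-\lambda) = c(z-z_0)^2$ and $R = a f(0)$ is a constant (since $\deg R \le N-1 = 0$).

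For necessity, the right-hand side must vanish to order two at $z_0$, which yields the pair of conditions
\begin{align*}
z_0\theta(z_0)h(z_0) + R &= 0, \\
(\theta(z_0) + z_0\theta'(z_0))h(z_0) + z_0\theta(z_0)h'(z_0) &= 0.
\end{align*}
Next, the decomposition $f = f_1 + \theta f_2$ from the proof of Theorem \ref{main1} is carried out using the splittings
$$\frac{\theta(z)}{(z-z_0)^2} = \psi_1(z) + \frac{\theta(z_0)}{(z-z_0)^2} + \frac{\theta'(z_0)}{z-z_0}, \qquad \frac{\theta(z)}{z-z_0} = \psi_2(z) + \frac{\theta(z_0)}{z-z_0},$$
where $\psi_1(z) = (\theta(z)-\theta(z_0)-\theta'(z_0)(z-z_0))/(z-z_0)^2$ and $\psi_2(z) = (\theta(z)-\theta(z_0))/(z-z_0)$ both belong to $K_\theta$. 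The requirement $f_2 = 0$ then forces $zh(z)$ to coincide with its degree-one Taylor polynomial at $z_0$; combined with $h \in H^2$ (and $z_0 \ne 0$, which follows from $a \ne 0$), this forces $h'(z_0) = 0$ and $h \equiv \alpha$ for some constant $\alpha$. The second vanishing condition above then collapses to $\alpha(z_0\theta'(z_0) + \theta(z_0)) = 0$, and since $\alpha = 0$ would imply $R = 0$ and $f = 0$, necessity follows.

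For sufficiency, given the hypothesis, I would take any $\alpha \ne 0$, set $R = -z_0\theta(z_0)\alpha$, and propose
$$f(z) = \frac{\alpha}{c} \cdot \frac{z\theta(z) - z_0\theta(z_0)}{(z-z_0)^2}.$$
The numerator vanishes to order two at $z_0$ under the hypothesis, so $f$ is analytic on $\mathbb{D}$, and direct substitution confirms $A_\Phi f = \lambda f$. The main obstacle is verifying $f \in K_\theta$ rather than merely $f \in H^2$. I would recognize $f$ as a linear combination of two manifestly $K_\theta$-valued quantities: writing $\tilde g_w(z) := (\theta(z)-\theta(w))/(z-w) \in K_\theta$ and differentiating the $K_\theta$-valued holomorphic map $w \mapsto \tilde g_w$, a direct expansion gives
$$z_0\,\partial_w\tilde g_w(z)\big|_{w=z_0} + \tilde g_{z_0}(z) = \frac{z\theta(z) - z_0\theta(z_0)}{(z-z_0)^2}$$
precisely when $\theta(z_0) + z_0\theta'(z_0) = 0$. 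Since $K_\theta$ is closed, both $\tilde g_{z_0}$ and $\partial_w\tilde g_w|_{w=z_0}$ lie in $K_\theta$, and therefore so does $f$. Equivalently, $f$ can be realized as the $K_\theta$-limit of the simple-zero eigenfunctions from the preceding corollary as the two zeros collide to $z_0$.
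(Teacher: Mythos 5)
The paper states this corollary without proof (the preceding remark only notes that derivatives of $\theta$ enter at multiple zeros), and your argument correctly supplies the intended one: it is the direct adaptation of the proof of Theorem \ref{main1} to a double zero, with the Lagrange interpolation step replaced by first-order Taylor (Hermite-type) interpolation at $z_0$. All the key points check out — the two vanishing conditions at $z_0$, the fact that $f_2=0$ forces $zh(z)$ to equal a linear polynomial vanishing at the origin and hence $h$ to be constant, and the membership of the eigenfunction in $K_\theta$ via the $w$-derivative of the $K_\theta$-valued difference quotient $(\theta(z)-\theta(w))/(z-w)$.
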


Note that if $z_1, z_2$ are zeros of $z(\Phi(z) - \lambda) = cz^2 + (b-\lambda)z +a$, then $z_1z_2 = a/c$. Put $\beta = a/c$.
If $|\beta|<1$ then $\lambda$ can be an eigenvalue only when $|z_1|, |z_2|<1$; if $|\beta|>1$, then a necessary condition
is that $|z_1|, |z_2|>1$. Finally, in the case when $|\beta| =1$, $\lambda$ is not an eigenvalue unless $|z_1|=|z_2|=1$. 

We now address the question about existence of $\lambda$, $z_1$ and $z_2$ satisfying the conditions
of Corollary \ref{main2}. We will show that under some (apparently, rather mild) restriction on the function $\theta$ 
we do not only have infinitely many solutions, but, moreover, the eigenvectors of a tridiagoanal TTO 
are complete in $K_\theta$. To formulate our results we will need some notions from the theory
of Smirnov classes $E^p(G)$ in general domains.


\subsection{Smirnov classes in multiconnected domains}
Let $G$ be a simply-connected domain in $\co$. 
Denote by $H^\infty(G)$ the class of all bounded analytic functions in $G$.
The {\it Smirnov class} $E^p(G)$, $0<p<\infty$, consists 
of those functions $f$ analytic in $G$ for which $f(\varphi(z)) (\varphi'(z))^{1/p} \in H^p$, where
$H^p$ is the usual Hardy space in $\D$ and $\varphi$ is some (any) conformal map of $\D$ onto $G$.
An equivalent definition describes $E^p(G)$ as the set of those functions $f$ for which there exists 
a sequence of rectifiable Jordan curves $\Gamma_n$ tending to the boundary (i.e. $\Gamma_n$ surrounds each compact subset 
of $G$ for sufficently large $n$) with the property 
$$
\sup_n \int_{\Gamma_n} |f(z)|^p |dz| <\infty.
$$
For the theory of Smirnov classes see, e.g., \cite[Chapter 10]{dur}.

A simply-connected domain  $G$ with rectifiable boundary is said to be a {\it Smirnov domain}  if $\varphi' $ is an outer function 
(as above,  $\varphi$ is some conformal map of $\D$ onto $G$). In this case we will say that a function $f\in E^p(G)$ 
is {\it outer} if $f(\varphi(z)) (\varphi'(z))^{1/p}$ is an outer function in $H^p$ and we will say that
$f$ {\it has no singular inner factor} if  $f(\varphi(z)) (\varphi'(z))^{1/p} = B(z) F(z)$, where $B$ is a Blaschke product
and $F$ is an outer function in $\D$.

We will need to define similar objects for the case of an annulus. For general multiconnected domains $G$
factorization theory in $E^p(G)$ was developed by D. Khavinson in \cite{khav} (see, also, \cite{khav1}). We will
however use the following  simple equivalent description of these classes. For $\beta \in \D$, $\beta\ne 0$, 
consider the annulus
$$
R_\beta =\{|\beta| < |z|<1\}.
$$
For $\alpha\in \T$ let $R_\beta^\alpha = R_\beta\setminus \{r\alpha: \ |\beta| < r<1\} $ be the annulus with a cut. 
Then $f\in E^p(R_\beta)$ if and only if for any $\alpha\in \T$ and a conformal mapping $\varphi: \D\to R_\beta^\alpha$
the function $f(\varphi(z)) (\varphi'(z))^{1/p}$ is in $H^p$. Analogously, we say that
$f\in E^p(R_\beta)$ has no singular inner factor in $R_\beta$ if  $f(\varphi(z)) (\varphi'(z))^{1/p}$ has no singular inner factor 
in $\D$ for any $\alpha$ and $\varphi$. Obviously, it is sufficient to take only two different values of $\alpha$.


\subsection{Completeness of eigenvectors of a tridiagonal TTO}
Let $\Phi(z) =\frac{a}{z} +b + c z $, $a,c\ne 0$, and let  $\beta = a/c$.
If $|\beta|<1$ we put
\begin{equation}
\label{ro1}
\Psi(z) = z\theta(z) -\frac{\beta}{z} \theta\Big(\frac{\beta}{z}\Big),
\end{equation}
while for $|\beta| >1$ put
\begin{equation}
\label{ro2}
\Psi(z) = z\theta(z) -\frac{1}{\bar \beta z} \theta\Big(\frac{1}{\bar \beta z}\Big).
\end{equation}
As before, let $R_\beta = \{z: \ |\beta| < |z| <1\}$ for $|\beta| <1$ and put
$R_\beta = \{z: \ |\beta|^{-1} < |z| <1\}$ for $|\beta| >1$. Then the function $\Psi$ is analytic
in the respective choice of the annulus $R_\beta$ and, moreover, $\Psi \in H^\infty(R_\beta)$.

\begin{theorem}
\label{main3}
Let $\Phi(z) =\frac{a}{z} +b + c z $, $a,c\ne 0$, and $|\beta| = |c/a| \ne 1$. Assume that the function
$\Psi$ defined by \eqref{ro1} or \eqref{ro2} has no singular inner factor in $R_\beta$. Then
the set of eigenvectors of $A_\Phi$ is complete in $K_\theta$. 
\end{theorem}

\begin{proof}
We consider the case  $|\beta| <1$, the case $|\beta|>1$ is analogous. 
Assume that $f\in K_\theta$ is orthogonal to all eigenfunctions $f_\lambda$ given by \eqref{glj}.
Note that $(\tilde f, \tilde g) = (g,f) $ for $f, g\in K_\theta$. 
Then we have
$$
0 =(f_\lambda, f) = z_1 (\tilde k^\theta_{z_1}, f) -  z_2 (\tilde k^\theta_{z_2}, f) = 
z_1 (\tilde f, k^\theta_{z_1}) - z_2 (\tilde f, k^\theta_{z_2}) = z_1 \tilde f(z_1) - z_2 \tilde f(z_2).
$$
Recall that $z_1z_2 = \beta$ and $z_1\theta(z_1) = z_2\theta(z_2)$. Consider the function 
$$
F(z) =  z \tilde f(z) -\frac{\beta}{z} \tilde f \Big(\frac{\beta}{z}\Big).
$$
Then we conclude that $F(z) = 0$ whenever $z \in R_\beta$ and $\Psi(z) =0$, whence 
$F/\Psi$ is analytic in $R_\beta$. This is true even if $\Psi$ has a multiple zero, since in this case it is not difficult to show
that $F$ will have the zero of at least the same multiplicity. 

It is clear that $F\in E^2(R_\beta)$. Let us show, using the assumption that $\Psi$
has no singular inner factor in $R_\beta$, that $F/\Psi \in E^2(R_\beta)$. 
First of all note that for almost all $z\in \T$
$$
|\Psi(z)| \ge |\theta(z)| - |\beta| = 1 - |\beta|,
$$
a similar estimate holds for $|z| = |\beta|$.

Fix some $\alpha\in\T$ such that $|\theta(r\alpha)|\to 1$ as $r\to 1-$ and 
$\big|\theta\big(\frac{\beta}{r\alpha} \big)\big|\to 1$ as $r\to |\beta|+$, and
consider the annulus with a cut $R_\beta^\alpha$. For a conformal map $\varphi$ of 
$\D$ onto $R_\beta^\alpha$ we have $(F\circ \varphi) \cdot (\varphi')^{1/2} \in H^2$ and so $(F\circ \varphi) \cdot (\varphi')^{1/2} = I F_o $
where $I$ is an inner function and $F_o$ is an outer function. Also, $\Psi\circ \varphi = B \Psi_o$ for a Blaschke product
$B$ and an outer function $\Psi_o$. Since $F/\Psi$ is analytic in $R_\beta^\alpha$ we conclude that $B$ divides $I$. 
Thus,
$$ 
\frac{(F\circ \varphi)}{\Psi\circ \varphi} \cdot (\varphi')^{1/2} = JH
$$
for some inner function $J$ and an outer function $H$. Now it is sufficient to show that
$H\in L^2(\T)$. We have $1/\Psi \in L^\infty(\partial R_\beta)$. By our choice of $\alpha$ 
there exists $\varepsilon, \delta>0$ such that $|\Psi(r\alpha)| \ge \varepsilon$ when $r\in (|\beta|, |\beta| +\delta)
\cup(1-\delta, 1)$. Since $(F\circ \varphi) \cdot (\varphi')^{1/2} \in L^2(\partial R_\beta^\alpha)$ 
we have 
$$ 
H\in L^2 \big( \varphi^{-1} (\partial R_\beta \cup \{ r\alpha: r \in (|\beta|, |\beta| +\delta)
\cup(1-\delta, 1) \})\big). 
$$
The functions $(F/\Psi)\circ \varphi $ and $\varphi'$ are obviously bounded on 
$\varphi^{-1} \big(\{ r\alpha: r \in (|\beta| +\delta, 1-\delta) \}\big)$. Thus, $H\in H^2$ and we conclude that 
$F/\Psi \in E^2(R^\alpha_\beta)$ for almost all $\alpha$, and so $F/\Psi \in E^2(R_\beta)$.

Recall that any function $f\in K_\theta$ has a meromorhic pseudocontinuation to $\Dh = \{|z| >1\}$ such that its nontangential 
boundary values on $\T$ taken from $\Dh$ coincide  with its boundary values in $\D$ and also
$f/\theta$ belongs to the Hardy space $H^2(\Dh)$ (which is the same as $E^2(\Dh)$). We claim that 
$F/\Psi \in H^2(\Dh)$. Indeed, 
$$
\frac{F(z)}{\Psi(z)} = \dfrac{\frac{f(z)}{\theta(z)} - \frac{\beta}{z^2\theta(z)} f \Big(\frac{\beta}{z}\Big)}
{1 -  \frac{\beta}{z^2\theta(z)} \theta \Big(\frac{\beta}{z}\Big)},
$$
and the function in the denominator is bounded away from zero. Analogously, one shows that 
$F/\Psi \in H^2(\{|z| <|\beta|\})$. Note also that $F(z)/\Psi(z) \to 0$  as $|z|\to \infty$ or $|z| \to 0$.

The nontangential boundary values of $F/\Psi$ taken from inside of $R_\beta$ coincide with the boundary values
taken from the outer domains $\{|z|<|\beta|\}$ and $\{|z| >1\}$ almost everywhere. 
Applying the Cauchy formula over the contours approaching the boundary 
from the opposite sides and passing to the limit (which is possible for functions in the classes $E^2$) 
we see that $F/\Psi$ has an analytic extension across the circles
$\{|z| = |\beta|\}$ and $\{|z| =1\}$. Thus, $F/\Psi$ is an entire function tending to zero at infinity.
We conclude that $F/\Psi \equiv 0$, whence $f\equiv 0$.
\end{proof}

We see that in the conditions of Theorem \ref{main3} the operator $A_\Phi$ has many eigenvectors whence 
$A_{\bar \Phi}$ is not hypercyclic.
\medskip

Now let us consider in more detail the case when  the boundary spectrum of $\theta$ is one point, e.g., 
$\sigma(\theta) = \{1\}$. In this case $\theta$ is meromorphic in 
$\co\setminus\{1\}$, while the function $\Psi$ will be meromorphic in $\co\setminus\{1, \beta\}$
(here we again consider the case $|\beta| <1$). Let $R_\beta^\alpha$ be the annulus with a cut which does not touch
$1$ and $\beta$, and let $\varphi$ 
be the corresponding conformal map. Then the function $\Psi\circ \varphi $ in $\D $ can have no singular inner factors 
except the atomic singular functions $\exp\big(a_j \frac{z+\zeta_j}{z-\zeta_j}\big)$, $j=1,2$, where $a_j>0$ 
and $\zeta_j\in \T$ are such that $\varphi(\zeta_1) =1$ and 
$\varphi(\zeta_2) =\beta$. Indeed, if $z\in \T$, $z\ne \zeta_1, \zeta_2$, and $z$ belongs to the support of the singular measure
from the inner factor of $\Psi\circ \varphi $, then there exists a sequence $z_n\to z$, $z_n\in \D$,
such that $\Psi(\varphi(z_n)) =o((z_n-z)^m)$ for any $m\in\N$, which contradicts the fact that $\Psi$ is analytic at $\varphi(z)$
(note that the conformal map $\varphi$ is itself analytic in a neighborhood of $1$ and $\beta$). 

Therefore, it is easy to see that $\Psi$ has no singular inner factor in $R_\beta$ if and only if 
\begin{equation}
\label{rep}
\limsup_{r\to 1-} (1-r)\log |\Psi(r)| = 0.
\end{equation}

\begin{prob*}
Is it true that the function $\Psi(z) = z\theta(z) -\frac{\beta}{z} \theta\Big(\frac{\beta}{z}\Big)$
has no singular inner factor in $R_\beta$ for any inner function $\theta$?
\end{prob*}

We believe that the answer to this question is positive, at least, for a wide class of inner functions. 

In the case when $\sigma(\theta) = \{1\}$ one can make several observations. They show that
the case  of a (possible) singular inner factor in the factorization of $\Psi$ is exceptional and can happen only rarely. 
Also, the assumption that $\Psi$ has a singular inner factor implies important restrictions on the zeros of $\theta$
and prohibits them to approach to 1 nontangentially.

\begin{corollary}
\label{main4}
Let $\theta$ be an inner function such that  $\sigma(\theta) = \{1\}$. Then
\medskip

1. For any $a\in \co$, $a\ne 0$, there exist at most one value of $c\in \co$, $c\ne 0$, $|c| \ne |a|$,
such that the function $\Psi$ given by \eqref{ro1} or \eqref{ro2} has a singular inner factor. 

Thus, for all values of $c\in \co$, $c\ne 0$, $|c| \ne |a|$, except at most one, the set of eigenvectors
of the operator $A_\Phi$ is complete and the operator $A_\Phi$ is not 
\medskip
hypercyclic.

2. If $\theta$ has an atomic singular factor $\exp\big(a \frac{z+1}{z-1}\big)$, $a>0$, 
then $\Psi$ given by \eqref{ro1} or \eqref{ro2} has no singular inner factor for any $\beta\ne 0$ with $|\beta| \ne 1$.
\medskip

3. If for some $\beta$ the function  $\Psi$ has a singular inner factor, then  
there exist $a>0$, $m\in \N$ and $c>0$ such that
all zeros of $\theta$, maybe except of a finite number, lie in the domain
$$
\Big\{ z\in \D: \ \Big|\exp\Big(a \frac{1+z}{1-z}\Big)\Big| \ge c|z-1|^m \Big\}.
$$
\end{corollary}

\begin{proof}
1. Assume that there exist $\beta_1, \beta_2 \ne 0$, $|\beta_1|, |\beta_2| \ne 1$ and $\beta_1\ne \beta_2$ such that
$\Psi_{\beta_1}(z) = z\theta(z) - \frac{\beta_1}{z} \theta\Big(\frac{\beta_1}{z}\Big)$
and $\Psi_{\beta_2}$ has singular inner factors. Then, by \eqref{rep},
$$
|\Psi_{\beta_j}(r)| \le \exp\Big(- \frac{a}{1-r}\Big)\qquad \text{as} \ r\to 1-,
$$
for some $a>0$, $j=1,2$. Then the function
$$
\frac{\beta_1}{z} \theta\Big(\frac{\beta_1}{z}\Big) - \frac{\beta_2}{z} \theta\Big(\frac{\beta_2}{z}\Big)
$$
tends to zero faster than any power as $z=r \to 1-$. Since this function is analytic at $z=1$, it is identically zero, which easily leads
to a contradiction. 

Now, if $a$ is fixed there exists at most one $c$ as above such that  $\Psi_{\beta}$ has a singular inner factor
for $\beta = c/a$. Also, $A_{\bar \Phi}$ has a complete set of eigenvectors for all values of $c$ except at most one.

Statement 2 follows by the same argument as Statement 1. To prove Statement 3, fix a a conformal mapping $\varphi$
of the  annulus $R_\beta^\alpha$ with a cut at $\alpha\ne 1$ such that $\varphi(1) =1$. Then, for some $a, \tilde a>0$, we have
$$
|\Psi(z)| \le \Big| \exp\Big( a \frac{\varphi^{-1}(z) +1}{\varphi^{-1}(z) -1}\Big) \Big| \le  
\Big| \exp\Big(\tilde a \frac{z +1}{z -1}\Big) \Big|
$$
as $z\to 1$ inside some Stolz angle at 1;
we use here that $\varphi$ is analytic in a neighborghood of 1 and preserves the angles. 
On other hand, $\Psi (z) = \frac{\beta}{z} \theta\big(\frac{\beta}{z}\big)$ at the points where
$\theta(z) =0$. However, $ \frac{\beta}{z} \theta\big(\frac{\beta}{z}\big) \sim C(z-1)^m$ for some $C\ne 0$ 
and $m\in \N_0$ due to analyticity at $z=1$.
\end{proof}

It follows from Corollary \ref{main4} that
in the case when $\sigma(\theta) = \{1\}$ and $\theta$ has an atomic singular factor 
or infinitely many zeros approaching the point 1 nontangentially all tridiagonal operators $A_\Phi$ 
with $|a| \ne |c|$ have complete sets of eigenvectors.
\bigskip
\\
\noindent
{\bf Acknowledgements. } The authors are grateful to Dmitry Khavinson, 
Jonathan Partington and Ilia Zlotnikov for useful discussions and helpful remarks.

\end{document}